\theoremstyle{plain}
\newtheorem{theorem}{Theorem}[section]
\newtheorem{proposition}[theorem]{Proposition}
\newtheorem{conjecture}[theorem]{Conjecture}
\newtheorem*{theoremmain}{Theorem \ref{thm-main}}
\theoremstyle{definition}
\theoremstyle{remark}
\def\emph#1{{\em #1\/}}
\def\S{{\fam0 S}}
\let\connsum=\#
\def\qmod#1{\allowbreak\kern0.5em\hbox{\rm(mod $#1$)}}
\def\set#1{\{#1\}}
\def\ee{\-} 
\def\vv(#1,#2){(#1,#2)} 
\def\cyc#1{(\!(#1)\!)}
\def\mZ{{\mathbb{Z}}}
\let\cart\times
\def\tline#1{\hbox to \hsize{#1}}
\def\er{r}
\def\ec{c}
\def\S{\mathcal{S}}
\long\def\ignore#1{}
\newbox\cobox
\def\crossout#1{%
 \setbox\cobox\hbox{#1}%
 \hbox to 0pt{\vrule height0.6ex depth-0.5ex width\wd\cobox\hss}%
 \unhbox\cobox}
\def\incgraphics#1{{\hbox{($#1$ goes here)}}}
\begin{document}


\title{\textbf{Criticality of counterexamples to toroidal
	edge-hamiltonicity%
	\thanks{The United States Government is authorized to reproduce
	and distribute reprints notwithstanding any copyright notation
	herein.}}}
\author{%
 M. N. Ellingham \\
 {\sl Department of Mathematics, 1326 Stevenson Center} \\
 {\sl Vanderbilt University, Nashville, TN 37240, U.S.A.} \\
 {\tt mark.ellingham@vanderbilt.edu}\\
 \and
 Emily A. Marshall \\
 {\sl Department of Mathematics, 1326 Stevenson Center} \\
 {\sl Vanderbilt University, Nashville, TN 37240, U.S.A.} \\
 {\tt emily.a.marshall@vanderbilt.edu}\\
}
\date{4 December 2013}
\maketitle 

\begin{abstract}
 A well-known conjecture of Gr\"unbaum and Nash-Williams proposes that
$4$-connected toroidal graphs are hamiltonian.  The corresponding
results for $4$-connected planar and projective-planar graphs were
proved by Tutte and by Thomas and Yu, respectively, using induction
arguments that proved a stronger result, that every edge is on a
hamilton cycle.  However, this stronger property does not hold for
$4$-connected toroidal graphs: Thomassen constructed counterexamples. 
Thus, the standard inductive approach will not work for the torus.  One
possible way to modify it is by characterizing the situations where some
edge is not on a hamilton cycle.  We provide a contribution in this
direction, by showing that the obvious generalizations of Thomassen's
counterexamples are critical in a certain sense.
 \end{abstract}

\section{Introduction}
\label{sec-intro}

 The study of hamilton cycles for graphs on surfaces was begun in 1931
by Whitney \cite{Wh31}, who showed that $4$-connected planar
triangulations are hamiltonian.  Tutte \cite{Tu56, Tu77} later
generalized this to all $4$-connected planar graphs.  Thomassen
\cite{Th83} (with a minor correction by Chiba and Nishizeki
\cite{ChNi86}) further extended this by showing that $4$-connected
planar graphs are hamilton-connected.
 Thomas and Yu \cite{ThYu94} showed that $4$-connected projective-planar
graphs are hamiltonian.

 In this paper we will be concerned with the following conjecture.

 \begin{conjecture}[{\rm Gr\"{u}nbaum \cite{Gr70} and Nash-Williams
\cite{NW73}}]\label{conj-grnw}
 Every $4$-connected toroidal graph is hamiltonian.
 \end{conjecture}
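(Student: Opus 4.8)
This conjecture is a major open problem, so I can only propose a line of attack rather than a complete argument; the plan is to push the \emph{Tutte-path} method that succeeded for the plane (Tutte \cite{Tu56}) and the projective plane (Thomas and Yu \cite{ThYu94}) through to the torus. The governing principle of that method is not to hunt for a hamilton cycle directly, but to prove by induction a stronger statement about Tutte paths, i.e.\ paths $P$ that meet the bridges of $P$ in a tightly controlled way (each bridge has at most three attachments, and each bridge incident with a prescribed face meets $P$ in at most two vertices). The point of the stronger statement is that it inducts cleanly, while $4$-connectivity is reserved for the very end: in a $4$-connected graph a Tutte path with the right boundary data is forced to be spanning, since a nontrivial bridge with three attachments would exhibit a forbidden $3$-cut, and the closed version then yields a hamilton cycle.

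First I would set up the inductive framework over $2$-connected (or internally $4$-connected) toroidal graphs, with the target being a Tutte-path conclusion relative to prescribed endpoints and a prescribed face. To manufacture a planar piece on which the classical planar theory applies, I would choose a shortest noncontractible cycle $C$ and cut the torus along it to obtain a cylinder; cutting a second time along an independent noncontractible cycle produces a disk whose boundary carries several prescribed arcs. On this planar piece one can invoke the planar Tutte-path machinery, with the boundary arcs playing the role of the prescribed face, obtaining spanning paths of the cut-open pieces with prescribed behaviour along the cut curves.

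The crux is reassembly, and this is where the torus departs fundamentally from the smaller cases. The planar Tutte paths must be spliced across the two copies of each cut vertex into a single spanning cycle, and the boundary conditions on those copies must be made simultaneously compatible. In the plane and projective plane this step is governed by the inductive invariant ``every edge lies on a hamilton cycle,'' but Thomassen's counterexamples show that this invariant is \emph{false} on the torus, so the induction cannot be strengthened in the familiar way. I therefore expect the main obstacle to be discovering the correct replacement invariant: a statement strong enough to survive cutting and splicing, yet weak enough to remain true, and which explicitly routes around the bridge configurations that Thomassen's examples realize. Proving that such an invariant exists and is preserved under these operations is the heart of the matter, and is exactly why the conjecture is still open; the criticality results established in this paper are meant as evidence about what that invariant must look like.
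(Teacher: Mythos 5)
There is no proof in the paper to compare against: the statement you were given is Conjecture \ref{conj-grnw}, which the paper explicitly presents as open (``a complete proof of Conjecture \ref{conj-grnw} still seems a long way off''), and whose actual contribution, Theorem \ref{thm-main}, is only evidence about counterexamples to the stronger edge-hamiltonicity property. Your proposal, by your own admission, is not a proof either, and the gap is not a repairable local step --- it is coextensive with the whole problem. Everything funnels into the ``replacement invariant'' for the reassembly step, and you neither construct it nor constrain it beyond saying it must route around Thomassen's examples \cite{Th83}. There is also a concrete technical soft spot in the part you do sketch: after cutting the torus twice to get a disk, the boundary carries two copies of each vertex on each cut curve, and the planar Tutte-path theorems of \cite{Tu56, Th83, ThYu94} control bridges only relative to a single prescribed face and a small amount of prescribed boundary data; they give no simultaneous control over the attachment behaviour along two independent cut curves, so the splicing step fails already at the level of what the planar machinery can deliver. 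This is precisely why the successful toroidal results, the $5$-connected theorem \cite{ThYu97} and the hamilton-path theorem \cite{ThYuZa05}, cut along only one noncontractible curve and still require substantially new machinery rather than a direct invocation of the planar theorems.

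That said, your diagnosis agrees exactly with the paper's own framing: Section \ref{sec-intro} attributes the failure of the standard induction to the falsity on the torus of the invariant ``every boundary edge lies on a Tutte cycle,'' proposes the same fix you do (an inductive statement with an explicitly characterized exceptional bipartite structure), and offers Theorem \ref{thm-main} --- the criticality of the grid-type examples, i.e.\ that one added white-white diagonal restores edge-hamiltonicity --- as evidence about what the exceptional structure must be. So as a summary of the research program your proposal is faithful to the paper and the literature, but as a proof of Conjecture \ref{conj-grnw} it does not get off the ground, and no amount of local patching would change that: the conjecture remains open.
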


 \noindent
 A number of partial results are known.
 Altshuler \cite{Al72} showed that $6$-connected toroidal graphs, which
are $6$-regular triangulations with a grid structure, are
hamiltonian.
 In the same paper he also showed that $4$-connected toroidal
quadrangulations, which are $4$-regular with a grid structure, are
hamiltonian.
 Brunet and Richter \cite{BrRi95} proved that $5$-connected toroidal
triangulations are hamiltonian, and this was generalized by
 Thomas and Yu \cite{ThYu97} to all $5$-connected toroidal graphs.
 Thomas, Yu and Zang \cite{ThYuZa05} showed that every $4$-connected
toroidal graph has a hamilton path.
 Recently some special classes of toroidal graphs, including
$4$-connected toroidal graphs with toughness exactly $1$, were shown to
be hamiltonian by Nakamoto and Ozeki and by those two authors with
Fujisawa \cite{FuNaOz13, NaOz12}.
 However, a complete proof of Conjecture \ref{conj-grnw} still seems a
long way off.

 One reason Conjecture \ref{conj-grnw} seems difficult to prove is that
the standard inductive approach used for the plane and projective plane
cannot be extended to the torus.  The results for $4$-connected planar
and projective-planar graphs in \cite{ThYu94, Tu56, Tu77} are
essentially proved by strengthening the result in two ways, to enable
induction to be used.
 The first strengthening is to look for what are known as \emph{Tutte
cycles} instead of hamilton cycles, in $2$-connected graphs instead of
$4$-connected graphs.  In the $4$-connected case a Tutte cycle must be a
hamilton cycle.
 Some additional control over the Tutte cycles is needed, and so the
second strengthening is to make sure that the Tutte cycle can use any
given edge on a designated `boundary' of the graph.  For
$4$-connected planar or projective-planar graphs, therefore, this means
that they are not just hamiltonian but \emph{edge-hamiltonian}: every
edge has a hamilton cycle through it.
 $4$-connected toroidal graphs, however, are not in general edge-hamiltonian, and
so the same type of inductive arguments fail.

 Examples of non-edge-hamiltonian $4$-connected toroidal graphs were
given by Thomassen \cite{Th83}.  He observed that the cartesian product
of two even cycles yields a bipartite $4$-connected quadrangulation of
the torus, and if a \emph{diagonal} (an edge between opposite vertices)
is added in any quadrangle, then that diagonal cannot be in a hamilton
cycle.  This construction is easily generalized.  Take any bipartite
$4$-connected toroidal quadrangulation $Q$,  say with a bipartition into
black and white vertices.
 As mentioned earlier, $Q$ has a grid structure, which we discuss in
more detail later.  It also has equally many black and white vertices.
 In each quadrangle we can add either a \emph{black-black} or
\emph{white-white} diagonal, specifying the color of its ends.
 For any nonempty subset of the quadrangles, add a black-black diagonal
across each quadrangle.  Then the resulting $4$-connected toroidal
graph does not have a hamilton cycle through any of the added diagonals.
 We will call these \emph{grid-type} examples.

 Even more generally, we can take a bipartite quadrangulation of the
torus in which there are equally many black and white vertices, and all
white vertices have degree $4$.  There may be black vertices of degree
$2$ or $3$, so the connectivity may be less than $4$.  However, it may
be possible to make the graph $4$-connected by adding black-black
diagonals in some quadrangles.  The added diagonals will again not be on
a hamilton cycle.  In Figure \ref{fig-nongrid} the solid edges form a
quadrangulation of the torus (represented in the usual way, as a
rectangle with opposite sides identified) that is only $2$-connected. 
The addition of the four diagonals (dashed edges) makes it
$4$-connected, but the diagonals are not on any hamilton cycle.
 These examples, however, are much harder to characterize than the
grid-type examples.

\begin{figure}
 \tline{%
	\hfill
	\epsfbox{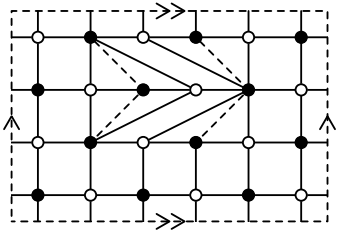}%
	\hfill
 }
 \begin{center}
 \caption{Non-grid example\label{fig-nongrid}}
 \end{center}
\end{figure}


 %

 Because of these examples, the inductive approach used for planar and
projective-planar graphs cannot be used for the torus without
modification.  A suitable modification might be to prove a result saying
that every $2$-connected toroidal graph has a Tutte cycle through any
boundary edge, except when a specific structure resulting from a
bipartite subgraph occurs.  Before trying to prove such a result,
however, it seems sensible to obtain some evidence as to whether the
problem (lack of edge-hamiltonicity) disappears when we depart even
slightly from the bipartite situation. 
 In this paper we address this by showing that the grid-type examples
are critical, in the sense that adding even one white-white diagonal, in
addition to the already added black-black diagonals, restores
edge-hamiltonicity.  Our main theorem is therefore as follows.

 \begin{theorem}\label{thm-main}
 \global\def\MainTheoremText{%
 Let $G$ be a 4-connected, 4-regular, bipartite simple graph on the
torus with partition sets of white and black vertices. 
 If we add a nonempty set $E_1$ of one or more black-black diagonals to
$G$, then no element of $E_1$ lies on a hamilton cycle in $G \cup E_1$.
 However, if we add one further white-white diagonal $e_2$ in a
quadrangle of $G \cup E_1$ then each edge of $G \cup E_1 \cup \set{e_2}$
lies on a hamilton cycle of that graph.
 }\MainTheoremText
 \end{theorem}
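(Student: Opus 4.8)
The plan is to separate the two assertions and to base both on a single parity observation about how a hamilton cycle can meet the diagonals. Write $B$ and $W$ for the black and white colour classes; since $G$ is bipartite with equally many vertices of each colour (as noted for such quadrangulations), put $|B|=|W|=n$. For the first assertion, suppose $H$ were a hamilton cycle of $G\cup E_1$ through some black-black diagonal. Every white vertex has both of its $H$-edges in $G$, since no edge of $E_1$ meets a white vertex; counting incidences between white vertices and the $G$-edges of $H$ therefore gives exactly $2n$ such edges. But $H$ has only $2n$ edges in all, so \emph{every} edge of $H$ lies in $G$ and no element of $E_1$ is used. This proves the first assertion.

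For the second assertion the same incidence count, now carried out in $G\cup E_1\cup\set{e_2}$, shows that any hamilton cycle uses equally many black-black and white-white diagonals: a $G$-edge contributes one white endpoint, $e_2$ contributes two, and a black-black diagonal contributes none, and symmetrically for black endpoints. As $e_2$ is the only available white-white diagonal, every hamilton cycle uses at most one diagonal of each colour, and a hamilton cycle through any diagonal must use exactly one black-black diagonal together with $e_2$. First I would record this as a lemma, since it pins down the only mechanism by which a diagonal can appear.

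Next I would convert the existence of such a cycle into a spanning-path problem in $G$. Fix the target black-black diagonal $e=b_1b_2$ and write $e_2=w_1w_2$. Deleting $e$ and $e_2$ from a hamilton cycle using both leaves two vertex-disjoint paths whose edges all lie in $G$ and which together span $V(G)$; for the two diagonals to reclose these into a single cycle, each path must join one black end to one white end, say with endpoint pairs $\set{b_1,w_1}$ and $\set{b_2,w_2}$ (the cross pairing works equally well). Conversely, any such pair of spanning $G$-paths recloses, via $e$ and $e_2$, into a hamilton cycle through both diagonals. Thus it suffices to produce, for each $e\in E_1$, a spanning system of two disjoint $G$-paths with these prescribed endpoints; and because $E_1\neq\emptyset$, the single construction applied to any one $e\in E_1$ simultaneously places $e_2$ on a hamilton cycle.

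It remains to build these 2-path systems and to treat the ordinary edges of $G$, and this is where I expect the real work to lie. Here I would invoke the grid structure of $G$ guaranteed by Altshuler: choose coordinates so that $G$ is a (possibly sheared) toroidal square grid with its checkerboard colouring, and assemble the required paths from boustrophedon subpaths sweeping the rows and joined through the columns, with purely local rerouting in the few quadrangles containing $b_1,b_2,w_1,w_2$. For an ordinary edge $e'\in E(G)$ one proceeds similarly: either a snake-type hamilton cycle of $G$ is routed through $e'$, or $e'$ is absorbed into a 2-path construction of the above type using $e_2$ and a nearby black-black diagonal. The main obstacle is precisely this constructive step: the grid may be twisted rather than a clean product $C_{2a}\times C_{2b}$, and the prescribed endpoints, the target edge $e'$, and $e_2$ may occupy arbitrary relative positions, so the snakes must be selected and locally modified carefully enough to stay disjoint, spanning, and correctly coloured in every configuration. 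The parity bookkeeping is easy; checking that an admissible routing exists in all cases is the hard part.
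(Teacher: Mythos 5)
Your treatment of the first assertion is correct: counting incidences of the cycle's edges at white vertices shows any hamilton cycle of $G \cup E_1$ lies entirely in $G$. (The paper does not reprove this; it is the observation it attributes to Thomassen.) Your parity lemma for the enlarged graph, and the ensuing reduction, are also sound: any hamilton cycle through a diagonal must use exactly one black-black diagonal $e$ together with $e_2$, so it suffices to find, for each $e \in E_1$, a hamilton cycle of $G \cup \set{e, e_2}$ through both $e$ and $e_2$ (and since such a cycle is spanning, it is automatically a hamilton cycle of the full graph $G \cup E_1 \cup \set{e_2}$, a point the paper exploits directly without needing your lemma). This reduction is exactly the paper's Proposition 2.1.

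The genuine gap is that you stop at this reduction. The statement you defer --- ``assemble the required paths from boustrophedon subpaths \ldots the snakes must be selected and locally modified carefully enough \ldots checking that an admissible routing exists in all cases is the hard part'' --- \emph{is} the theorem; essentially the entire paper (its Section 2) consists of carrying out that case analysis. Moreover, your sketch presumes a two-dimensional grid in which row-sweeping snakes make sense, but the classification $G = Q(m,n;q)$ includes the degenerate widths $m=2$ and especially $m=1$, where $G$ is a circulant on a single $n$-cycle $C$ plus $q$-chords and there are no rows to sweep. For $m=1$ the paper must abandon the grid picture entirely and build the cycle by partitioning $C$ into $(q+1)$-segments closed by chords, then merging them with its CC-link/CE-link machinery, keeping odd and even edges of $C$ for links in opposite directions to avoid conflicts; nothing in your outline anticipates this, and it is the longest and most delicate part of the proof. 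The handling of ordinary edges $e' \in E(G)$ is, as you suspect, easy (a snake cycle through one vertical and one horizontal edge plus an automorphism of $G$), but the constructive core for the diagonals is missing, so the proposal is a correct framing rather than a proof.
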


 The proof of this result makes up Section \ref{sec-mainproof}, and in
Section \ref{sec-concl} we give some concluding remarks.

 \section{Proof of the main result}\label{sec-mainproof}

\begin{figure}
 \tline{%
	\epsfbox{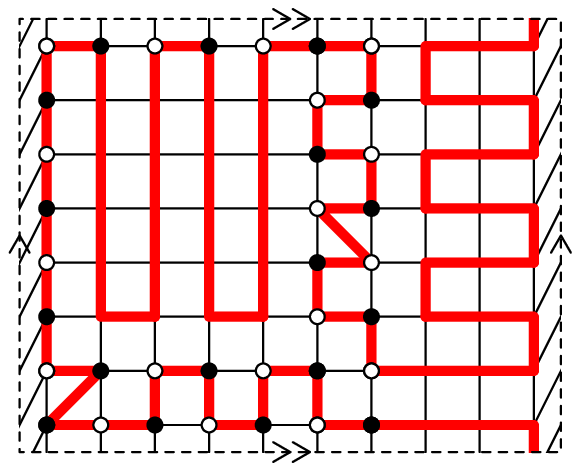}%
	\hfill
	\epsfbox{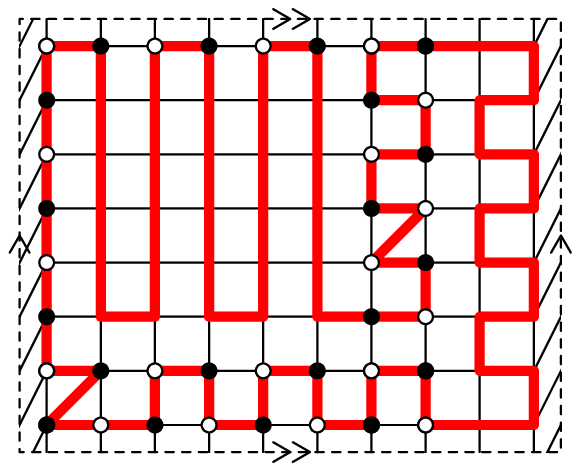}%
	\hfill
	\epsfbox{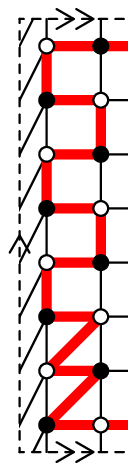}%
	\hfill
	\epsfbox{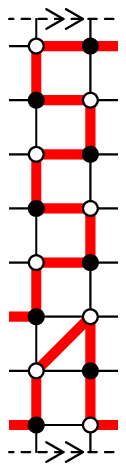}%
 }
 \tline{%
	\hbox to 60.5truemm{\hfil(a) $\er \ge 2$, $\ec$ even\hss}%
	\hfill
	\hbox to 60.5truemm{\hfil(b) $\er \ge 3$, $\ec$ odd\hss}%
	\hfill
	\hbox to 11truemm{\hfil(c)\hss}%
	\hfill
	\hbox to 11truemm{\hfil(d)\hss}%
 }
 \begin{center}
 \caption{Case 1.1\label{fig-case1.1}}
 \end{center}
\end{figure}

 Most of the proof of Theorem \ref{thm-main} is accomplished by the
following proposition.

 \begin{proposition}\label{prop-main}
 Let $G$ be a 4-connected, 4-regular, bipartite simple graph on the
torus with partition sets of white and black vertices.  Suppose we add a
black-black diagonal $e_1$ in one quadrangle of $G$, and a white-white
diagonal $e_2$ in a different quadrangle.  Then the resulting graph has
a hamilton cycle that uses both $e_1$ and $e_2$.
 \end{proposition}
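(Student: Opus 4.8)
The plan is to exploit the grid structure of $G$ guaranteed by Altshuler's theorem, reducing the problem to an explicit construction of a hamilton cycle in a toroidal grid that is forced to use the two prescribed diagonals. First I would fix coordinates, representing $G$ as a quotient of the integer grid: vertices are pairs $(i,j)$ with $i\in\mZ_\er$ and $j\in\mZ_\ec$ (allowing a shift when one wraps around the torus), with the usual horizontal and vertical edges, and with $(i,j)$ colored by the parity of $i+j$. Bipartiteness constrains the dimensions and the shift to be compatible with this $2$-coloring. Using the translational automorphisms of the grid, together with the reflection symmetries and the symmetry interchanging the two color classes (which interchanges the roles of $e_1$ and $e_2$), I would normalize the black-black quadrangle of $e_1$ to a standard location, so that only the offset of the white-white quadrangle of $e_2$ remains as a parameter.

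Before constructing anything, it is worth recording why the cycle should exist at all, since the very parity count that obstructs $e_1$ alone now works in our favor. In the bipartite graph a hamilton cycle has $2n$ edges and strictly alternates colors; a single diagonal joins two vertices of the same color, so it would leave an odd number of color-changing edges and could not close up — this is exactly Thomassen's obstruction. With one black-black and one white-white diagonal the two color-preserving steps cancel, the parity obstruction disappears, and the two diagonals split the desired cycle into two color-alternating arcs, each running from a black endpoint to a white endpoint and hence of odd length, with lengths summing to $2n-2$. The heart of the proof is to realize this combinatorially: the edge $e_1$ introduces a one-step shift in the black/white \emph{phase} of an otherwise alternating traversal, and this shift is carried through the grid until it is absorbed by $e_2$.

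Concretely, I would build the cycle as a serpentine (boustrophedon) traversal that sweeps through the rows of the grid and closes up around the torus, then modify it locally near the two special quadrangles. Between the two quadrangles the snake is run in the shifted phase and elsewhere in the normal phase, the two diagonals serving precisely as the edges at which the phase flips. The construction then depends on the relative position of the two quadrangles and on the parities of $\er$ and $\ec$, and this is where the case analysis enters (the configuration in Figure \ref{fig-case1.1} being one such case): separate arguments according to whether $e_2$ lies in the same row-band as $e_1$, an adjacent band, or a distant band, and according to whether $\ec$ is even or odd. Small grids, and the degenerate situation in which the two quadrangles are close enough that their local modifications interact, must be handled on their own.

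The hard part will be making the case analysis genuinely exhaustive while keeping each case checkable. Two points deserve the most care. First, one must ensure that the serpentine cycle really closes up on the torus despite the wrap-around shift, since the shift alters the parity bookkeeping and can force extra subcases. Second, one must verify in each case that the exhibited cycle visits every vertex exactly once and uses both diagonals, rather than breaking into several disjoint cycles. I expect most of the effort to go into organizing the relative positions of $e_1$ and $e_2$ into a short list of normal forms via the symmetries above, so that only a handful of local patterns, each verifiable from a figure, need to be displayed.
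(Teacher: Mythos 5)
Your overall strategy---representing $G$ as a shifted grid $Q(m,n;q)$ via Altshuler's classification, normalizing the position of $e_1$ by translations and reflections, and then exhibiting explicit serpentine hamilton cycles in a case analysis on the position of $e_2$ and the parities involved---is exactly what the paper does in its main cases, and your parity discussion correctly identifies why one diagonal of each color cancels Thomassen's obstruction. For $m \ge 3$ (and, with some extra care about wrap-around, $m = 2$) your plan is sound and matches the paper's Cases 1 and 2.

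However, there is a genuine gap: the width-one case $m=1$ cannot be reached by your construction, and it is not a ``small grid'' that can be disposed of by ad hoc local modifications. When $m=1$ the graph is a circulant: a single $n$-cycle $C$ together with all $q$-chords, where $n$ is even, $q$ is odd, and $3 \le q < n/2$, so $n \ge 2q+2$; this is an infinite two-parameter family, not a finite list of degenerate configurations. There are no rows to sweep serpentine-fashion---every ``horizontal'' edge wraps around the torus with shift $q$---and any hamilton cycle must use many such chords, so the problem becomes: in $C_n$ plus all $q$-chords, find a hamilton cycle through two prescribed chords of lengths $q \pm 1$. The paper devotes roughly half of its proof (Case 3) to precisely this, using a genuinely different technique: it partitions the vertices of $C$ into consecutive blocks closed into cycles of length about $q+1$ (the blocks containing $e_1$ and $e_2$ being closed by those chords), plus leftover matchings, and then merges everything into one cycle by local surgeries (``CC-links'' and ``CE-links'' that trade a pair of mated cycle edges for a pair of $q$-chords), with an odd/even edge bookkeeping scheme guaranteeing that the surgeries never conflict, and a further split according to whether $e_1$ and $e_2$ cross. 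Nothing in your proposal anticipates an argument of this kind, so as written the proof could only be completed for $m \ge 2$; you would need to supply a separate general construction for the circulant case before the case analysis is exhaustive.
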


 \begin{proof}
By Euler's formula, we know that all 4-regular, bipartite graphs on
the torus are quadrangulations.  
 As is well known \cite{Al72, NaNe00, Th91} $4$-regular
quadrangulations of the torus (bipartite or not) can be described (not
necessarily uniquely) by three integer parameters $m \ge 1$ (width), $n \ge 1$
(height) and $q$ (shift).
 To construct the quadrangulation we will denote $Q(m,n;q)$, take an
$m$-vertex path $P_m$ with vertex set $\mZ_m = \set{0, 1, 2, \ldots,
m-1}$ and an $n$-vertex cycle $C_n$ with vertex set $\mZ_n = \set{0, 1,
2, \ldots, n-1}$ (vertices labeled in the obvious order in each case).
 Representing the torus as a rectangle with opposite sides identified,
embed the cartesian product $P_m \cart C_n$ with the copies of $P_m$
horizontal and the copies of $C_n$ vertical.  Vertices are identified by
ordered pairs $(i,j)$ with $i \in \mZ_m$ and $j \in \mZ_n$, and we
specify edges and paths by concatenated ordered pairs.  We place vertex
$\vv(0,0)$ at  bottom left, and $\vv(m-1,n-1)$ at top right.
 In the cylindrical face between cycles $\set{m-1} \cart C_n$ and
$\set{0} \cart C_n$ add edges $\vv(m-1, j)\ee\vv(0, j+q)$ for $j \in \mZ_n$
 (so only the value of $q$ modulo $m$ matters).
 For example, Figure \ref{fig-case1.1}(a) and (b) show $Q(10,8;2)$ with
additional diagonals $e_1, e_2$.


 Each $Q(m, n; q)$ has an automorphism $U$ (translation up) which maps
every $\vv(i,j) \mapsto \vv(i, j+1)$, and an automorphism $R$
(translation right) which maps $\vv(i,j) \mapsto \vv(i+1,j)$ for $i \ne
m-1$ and $\vv(m-1, j) \mapsto \vv(0, j+q)$.
 There are also isomorphisms $F_1, F_2$ (reflections) from $Q(m, n; q)$
to $Q(m, n; -q)$: $F_1$ maps $\vv(i,j) \mapsto \vv(i,-j)$, and $F_2$
maps $\vv(i,j) \mapsto \vv(m-1-i, j)$.

 Now $G=Q(m,n;q)$ for some $m$, $n$ and $q$.  Since $G$ is bipartite,
$n$ must be even.  Since $G$ is simple, $n \ge 4$, and there are
restrictions on $q$ if $m = 1$ or $2$, which we discuss later.
 In the toroidal embedding of $G$, number the columns of faces $0, 1, 2,
\ldots, m-1$, so that column $i$ consists of faces between $\set{i-1}
\cart C_n$ and $\set{i} \cart C_n$.
 Similarly, number the rows of faces $0, 1, 2, \ldots, n-1$ so that row
$j$ consists of faces between $P_m \cart \set{j-1}$ and $P_m \cart
\set{j}$ (faces in column $0$ do not have a row number).

 \smallskip
 \noindent\textbf{Case 1.}  Suppose that $m \ge 3$, or that $m=2$ and
$e_1$ and $e_2$ are in the same column.  By applying a suitable power of
$R$ we can assume that neither $e_1$ nor $e_2$ is in column $0$, and at
least one of them is in column $1$.  Without loss of generality suppose
$e_1$ is in column $1$.  By applying a suitable power of $U$ we can make
one end of $e_1$ be $\vv(0,0)$.  Then, applying $F_1$ if necessary
(which negates $q$, but the value of $q$ will not matter in Case 1), we
can assume that $e_1 = \vv(0,0)\ee\vv(1,1)$.

 Now let $\ec$ and $\er$ be respectively the column and row of the face
for which $e_2$ is a diagonal.  We have ensured that $\ec \ne 0$, but
possibly $\er = 0$.

 \smallskip
 \noindent\textbf{ Case 1.1.} Suppose $\er \ge 2$.  If $\ec$ is even, then
we can find a hamilton cycle through $e_1$ as shown in Figure
\ref{fig-case1.1}(a); this works even if $\er=2$ or $\ec=2$ or both, and
regardless of whether $\er$ is odd or even.
 If $\ec=m-1$ then we replace the horizontal zigzag on the right which
joins $\vv(\ec,0)$ to $\vv(\ec,1)$ by the
single edge $\vv(m-1,0)\ee\vv(m-1,1)$.

 If $\ec$ is odd and $\er \ge 3$ then
we can find a hamilton cycle through $e_1$ and $e_2$ as shown in Figure
\ref{fig-case1.1}(b); this works even if $\ec=1$, and regardless of
whether $\er$ is odd or even.
 If $\ec=1$ and $\er=2$ we modify column $1$ as shown in Figure
\ref{fig-case1.1}(c).
 If $\ec \ge 3$ is odd and $\er = 2$ we modify column $\ec$ as shown in
Figure \ref{fig-case1.1}(d).
 If $\ec=m-1$ in any of these cases then we replace the horizontal
zigzag on the right which joins $\vv(\ec,0)$ to $\vv(\ec,n-1)$ by the edge
$\vv(m-1,0)\ee\vv(m-1,n-1)$.

\begin{figure}
 \tline{%
	\hfill
	\epsfbox{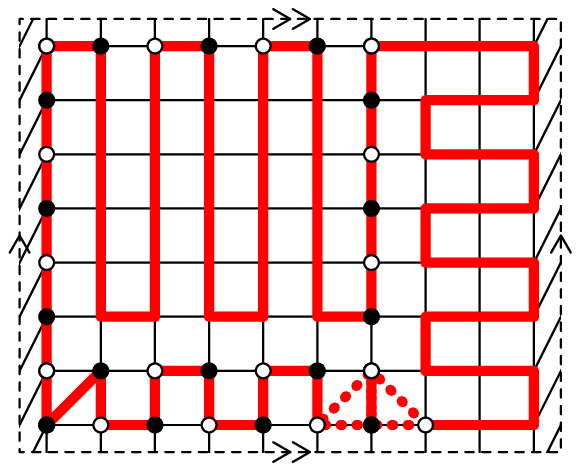}%
	\hfill
	\epsfbox{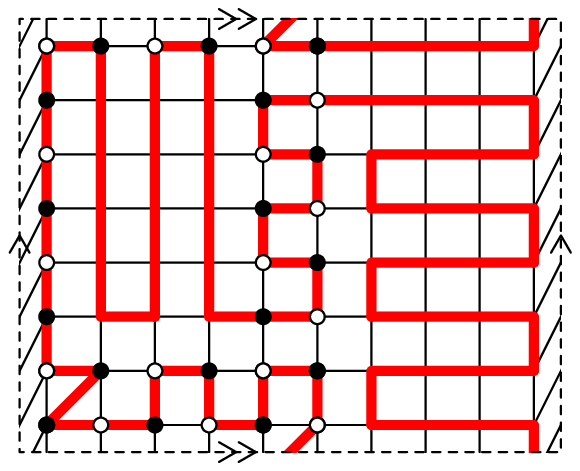}%
	\hfill
 }
 \tline{%
	\hfill
	\hbox to 60.5truemm{\hfil(a) $\er=1$\hss}%
	\hfill
	\hbox to 60.5truemm{\hfil(b) $\er=0$, $\ec$ odd\hss}%
	\hfill
 }
 \begin{center}
 \caption{Cases 1.2 and 1.3\label{fig-cases1.2-3}}
 \end{center}
\end{figure}

 \smallskip
 \noindent\textbf{ Case 1.2.} Suppose $\er = 1$.  Then $\ec \ge 2$ and $m
\ge 3$.
 We have a hamilton cycle through $e_1$ and $e_2$
as shown in Figure \ref{fig-cases1.2-3}(a).
 If $\ec=2k$ is even then we use the path
$\vv(2k-1,0)\ee\vv(2k,1)\ee\vv(2k,0)\ee\vv(2k+1,0)$,
 and if $\ec=2k+1$ is odd then we use the path
$\vv(2k-1,0)\ee\vv(2k,0)\ee\vv(2k,1)\ee\vv(2k+1,0)$.
 This works even if
$\ec=2$.  If $\ec$ is even and $\ec = m-1$ then
 we replace the horizontal zigzag which joins
$\vv(\ec,0)$ to $\vv(\ec,n-1)$ by the edge $\vv(m-1,0)\ee\vv(m-1,n-1)$.
 If $\ec$ is odd then the original construction works even if
$\ec=m-1$.

 \smallskip
 \noindent\textbf{ Case 1.3.} Suppose $\er = 0$.
 If $\ec$ is even then $e_2$ has the form $\vv(\ec-1,0)\ee\vv(\ec,n-1)$.  We
apply $U$ then $R^{m-1-\ec}$ then $F_2$, which move $e_2$ to
$\vv(\ec-1,1)\ee\vv(\ec,0)$ then to $\vv(m-2,1)\ee\vv(m-1,0)$ then to
$\vv(1,1)\ee\vv(0,0)=e_2'$.  These move $e_1$ to $\vv(0,1)\ee\vv(1,2)$ then to
$\vv(m-1-\ec,1)\ee\vv(m-\ec,2)$ then to $\vv(\ec,1)\ee\vv(\ec-1,2)=e_1'$.  Now
$e_1'$ is not in column $0$ or row $0$ so we can apply an earlier case
to $e_2'$ and $e_1'$, replacing $e_1$ and $e_2$ respectively.

 So $\ec$ is odd.  Then we can find a hamilton
cycle through $e_1$ and $e_2$ as shown in Figure
\ref{fig-cases1.2-3}(b).  If $\ec=m-1$ then we replace the horizontal
zigzag on the right by the edge $\vv(m-1,n-1)\ee\vv(m-1,n-2)$.  This
works even if $\ec=1$.

\begin{figure}
 \tline{%
	\kern15pt%
	\epsfbox{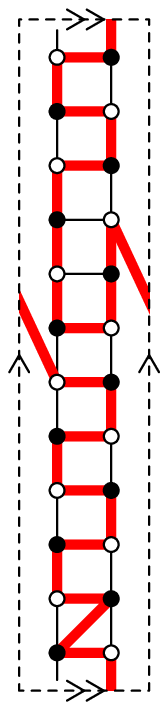}%
	\hfill
	\epsfbox{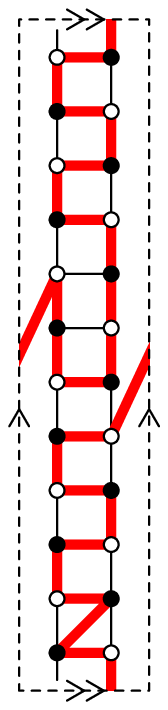}%
	\hfill
	\epsfbox{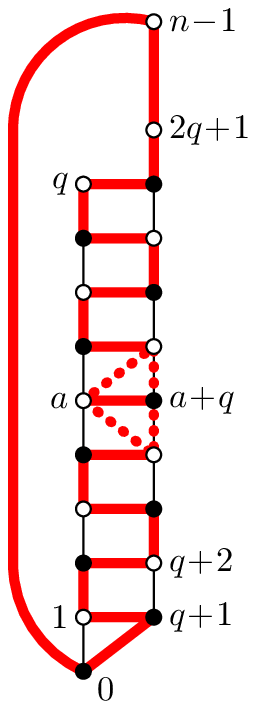}%
	\hfill
	\epsfbox{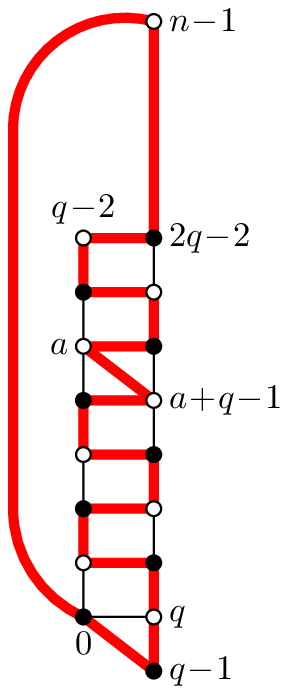}%
	\kern15pt%
 }
 \tline{%
	\kern15pt%
	\hbox to 15truemm{\hss(a) $i < j$ or $j=0$\hss}%
	\hfill
	\hbox to 15truemm{\hss(b) $i > j \ge 2$\hss}%
	\hfill
	\hbox to 65truept{\hss(c) $k_1 = q+1$\hss}%
	\hfill
	\hbox to 80truept{\hss(d) $k_1 = q-1$\hss}%
	\kern15pt%
 }
 \begin{center}
 \caption{Cases 2 and 3.1\label{fig-case2}}
 \end{center}
\end{figure}

 \smallskip
 \noindent\textbf{ Case 2.} Suppose that $m=2$ and $e_1$ and $e_2$ are in
different columns.  Without loss of generality suppose $e_1$ is in
column $1$ and $e_2$ is in column $0$.  By applying an appropriate power
of $U$, and possibly $F_2$, we can move $e_1$ so that $e_1 =
\vv(0,0)\ee\vv(1,1)$.  Then $e_2 = \vv(0,i)\ee\vv(1,j)$ where $i$ is odd
and $j$ is even and $0 \le i,j \le n-1$.  There are slightly different
pictures depending on the order of $i$ and $j$.  Figure
\ref{fig-case2}(a) shows the hamilton cycle through $e_1$ and $e_2$ for
$i<j$, and (b) is for $i>j \ge 2$.  The case $i> j = 0$ is treated as
$i < j=n$, using Figure \ref{fig-case2}(a) with the path
 $\ldots\vv(0,i)\ee\vv(1,j=n=0)\vv(1,n-1)\ee\vv(1,n-2)\ldots\vv(1,i+1)\ee%
	\vv(0,i+1)\ee\vv(0,i+2)\ee\vv(0,i+3)\ldots\vv(0,n=0)\ee%
	\vv(1,1)\ldots$.

 \smallskip
 \noindent\textbf{ Case 3.} Suppose that $m=1$.  
 Then $G$ has a single vertical cycle $C=C_n$ containing all vertices,
and we identify vertices with elements of $\mZ_n$.
 We write edges and paths as comma-separated sequences of vertices
inside parentheses, and to indicate a cycle we use double parentheses,
so that for example $C=\cyc{0, 1, 2, \ldots, n-1}$.
 An edge $(i,j)$ with $j-i = \pm k$ (mod $n$), $2 \le k \le n/2$, is
called a \emph{$k$-chord}, or just a \emph{chord}.
 $G$ is a circulant graph containing edges of $C$ and all possible
$q$-chords.  The added diagonals $e_1$ and $e_2$ are $(q \pm 1)$-chords.
 Two chords $(i,j)$ and $(k,\ell)$ \emph{cross} if $i, j, k, \ell$ are
distinct and appear in the order $i, k, j, \ell$, or its reverse, along
$C$.


 Since $G$ is bipartite, $n$ is even and $q$ must be odd.  Since $G$ is
simple, $q \ne 0, 1, -1$ or $n/2$ (mod $n$).  Moreover, $Q(1,n;q)$ is
identical to $Q(1,n;-q=n-q)$ (both embeddings have the same underlying
graphs and facial cycles) and so we may assume that $3 \le q < n/2$. 
Thus, $n \ge 2q+2 \ge 8$.
 We may assume that $e_1$ is a $k_1$-chord and $e_2$ is a $k_2$-chord,
where $k_1 \ge k_2$ and $k_1, k_2 \in \set{q-1, q+1}$.

 For this case it is difficult to use our standard picture of the
embedding on the torus.  With only one column of vertices, the desired
cycle may use many of the edges crossing column $0$, which makes it
difficult to follow.  Thus, for this case we will use two alternative
representations.

 \smallskip
 \noindent\textbf{Case 3.1.} Suppose $e_1$ and $e_2$ cross.
 Using automorphisms of $G$, we may suppose that $e_1 = (0, k_1)$ and
$e_2 = (a, b=a+k_2)$ where $1 \le a \le k_1-1$ and $k_1+1 \le b \le
k_1+k_2-1$.

 In this case we break the cycle $C$ into two segments,
depicted as vertical paths, so straight vertical edges are edges of $C$.
Straight horizontal edges represent $q$-chords $(i,i+q)$ with $i$ at
left, $i+q$ at right.  Other edges must be identified using their
endvertices.
 Quadrangles bounded by horizontal and vertical edges represent faces
in the embedding, although we do not see all faces in our picture.

 If $k_1 = q+1$ then $1 \le a \le q$ and $q+2 \le b \le 2q+1 \le n-1$,
and we have a hamilton cycle through $e_1$ and $e_2$ as shown in Figure
\ref{fig-case2}(c), using either $(a-1,a+q-1,a+q,a,a+q+1,a+1)$ if
$k_2=q+1$, or $(a-1,a+q-1,a,a+q,a+q+1,a+1)$ if $k_2=q-1$.

 If $k_1 = q-1$ then $k_2 = q-1$ also.  Then $1 \le a \le q-2$ and $q
\le b \le 2q-3 < n-1$.  We have the hamilton cycle shown in Figure
\ref{fig-case2}(d).

 \smallskip
 \noindent\textbf{Case 3.2.} Suppose $e_1$ and $e_2$ do not cross.  Note
that $e_1$ and $e_2$ have no common vertex because one is a black-black
diagonal and the other is a white-white diagonal.
 Using automorphisms of $G$, we may suppose that $e_1 = (0, k_1)$ and
$e_2 = (a, b=a+k_2)$ where $a \ge k_1+1$ and $k_1+k_2+1 \le b \le
n-1$.
 Regard all vertices as nonnegative integers $i$ with $0 \le i \le
n-1$, so that we can order them.

 In this case we will draw $C$ as a circle so the other edges are
literally chords of this circle.
 The general pattern is to divide the vertices up into cycles by taking
$q+1$ consecutive vertices along the circle and closing up the cycle
with a chord.  The added diagonals $e_1$ and $e_2$ give cycles of length
$q+2$ or $q$.
 Next these cycles are connected by choosing an edge $f=(i,i+1)$ of $C$
in one cycle and an edge $f'=(j,j+1)$ in the next cycle, so that
$g=(i,j)$ and $g'=(i+1,j+1)$ are $q$-chords, and removing $f$ and $f'$,
and then replacing them by $g$ and $g'$, to merge the two cycles
together. Leftover vertices are incorporated using a similar strategy,
and eventually everything is merged into a single cycle.  Care must be
taken so that edges of a cycle used for one purpose (such as linking to
the previous cycle) do not overlap with those used for another purpose
(such as linking to the next cycle, or to leftover vertices).

 Recall that $q$ is odd, so $k_1, k_2 = q \pm 1$ are even.  Also, $e_1$
is a black-black edge while $e_2$ is a white-white edge, so $a$ is odd. 
 Let $C_1 = \cyc{0, 1, 2, \ldots, k_1}$ and let $C_2 = \cyc{a, a+1, a+2,
\ldots, a+k_2}$.  Consider the vertices along $C$
after $C_1$ but before $C_2$, which we wish to partition into
$(q+1)$-cycles as far as possible.
 For each integer $i$ let $x_i = k_1 + 1 + i(q+1)$ and let $p =
\max\set{i \;|\; x_i \le a}$; then $p \ge 0$.  We have $p$
$(q+1)$-cycles $D_0, D_1, \ldots, D_{p-1}$ where $D_i = \cyc{x_i, x_i+1,
\ldots, x_i+q=x_{i+1}-1}$.
 This leaves vertices $x_p, x_p+1, \ldots, a-1$: since $x_p$ and $a$ are
both odd, there are an even number of these, from which we form a
(possibly empty) matching $M = \set{(x_p, x_p+1), (x_p+2,x_p+3), \ldots,
(a-2,a-1)}$.

 In a similar way we let $y_i = a+k_2+1 + i(q+1)$, $r = \max\set{i \;|\;
y_i \le n} \ge 0$ and divide the vertices along $C$ after $C_2$ but
before $C_1$ into $r$ $(q+1)$-cycles $E_0, E_1, \ldots E_{r-1}$, where
$E_i = \cyc{y_i, y_i+1, \ldots, y_i+q = y_{i+1}-1}$.  Since $y_r$ and
$n$ are both even, there are an even number of leftover vertices from
which we form a (possibly empty) matching $N = \set{(y_r, y_r+1),
(y_r+2, y_r+3), \ldots, (n-2, n-1)}$. So as we go along $C$ the vertices
are partitioned into a sequence of subgraphs $\S = C_1, D_0, D_1,
\ldots, D_{p-1}, M, C_2, E_0, E_1, \ldots, E_{r-1}, N$ (omitting $M$ or
$N$ if they are empty).  We need to merge these into a single hamilton
cycle that uses $e_1$ and $e_2$.

 Given an edge $(i, i+1)$, the edges $(i+q, i+q+1)$ and $(i-q,i-q-1)$
are called its \emph{forward} and \emph{backward mates}, respectively. 
If we have two vertex-disjoint cycles $Z$ containing $(i,i+1)$ and $Z'$
containing its mate $(i+q, i+q-1)$ then we may combine them into a new
cycle $Z \cup Z' - \set{(i,i+1),(i+q,i+q+1)} \cup \set{(i,i+q), (i+1,
i+q+1)}$.  We call this a \emph{cycle-to-cycle link}, or \emph{CC-link}.
 If we have a cycle $Z$ containing $(i,i+1)$, and its forward mate
$(i+q,i+q+1)$ is vertex-disjoint from $Z$ (this mate will be an edge in
one of the matchings $M$ or $N$), then we may combine them into a new
cycle $Z - (i,i+1) \cup (i,i+q,i+q+1,i+1)$.  We may apply a similar
operation using the backward mate $(i-q,i-q+1)$.
 We call this a \emph{cycle-to-edge link}, or \emph{CE-link}.

 Our basic idea is to link together consecutive subgraphs in the
sequence $\S$ using CC- and CE-links.
 An edge of $C$ belonging to a subgraph of $\S$ is
\emph{forward-linking} if its forward mate is in the next subgraph of
$\S$, and \emph{backward-linking} if its backward mate is in the
previous subgraph of $\S$.
 To avoid conflicts between forward- and backward-linking edges we
classify an edge $e=(i,i+1)$ of $C$ as \emph{odd} or \emph{even}
according to whether $i$, its smaller end, is odd or even, respectively.
 Note that a mate of $e$ is odd when $e$ is even, and vice versa,
because $q$ is odd.
 In each cycle of $\S$ we will use odd edges to link in one direction
and even edges to link in the opposite direction.

 Suppose we have two consecutive cycles $Z, Z'$ in $\S$.  We may write
$Z = \cyc{i-s, i-s+1, \ldots, i}$ and $Z'=\cyc{i+1, i+2, \ldots, i+t+1}$
where $s, t \in \set{q-1, q, q+1}$.
 Because $q \ge 3$ and $s, t \ge q-1$, $Z \cap C$ always contains the
two edges $(i-2,i-1)$ and $(i-1,i)$, and they are always forward-linking
because $Z'$ always contains their forward mates $(i+q-2,i+q-1)$ and
$(i+q-1,i+q)$.
 Therefore we always have both an odd forward-linking edge of
$Z$ mated with an even backward-linking edge of $Z'$, and an even
forward-linking edge of $Z$ mated with an odd backward-linking edge of
$Z'$.

 Suppose we have a matching $L$ preceded by a cycle $Z$ in $\S$. We may
write $Z = \cyc{i-s, i-s+1, \ldots, i}$ and $L = \set{(i+1, i+2), (i+3,
i+4), \ldots, (i+t-1, i+t)}$ where $s \in \set{q-1, q, q+1}$ and $t$ is
even with $2 \le t \le q-1$.  Now if $(i+j, i+j+1)$ is an edge of $L$
then $1 \le j \le t-1 \le q-2$, so that $i-s \le i+1-q \le i+j-q$ and
$i+j+1-q \le i-1$, which shows that the backward mate $(i+j-q,i+j+1-q)$
is in $Z$.  Thus, every edge of $L$ is backward-linking.  Similarly, if
a matching $L$ is followed by a cycle in $\S$, then every edge of $L$ is
forward-linking.

\begin{figure}
 \tline{%
	\kern15pt%
	\epsfbox{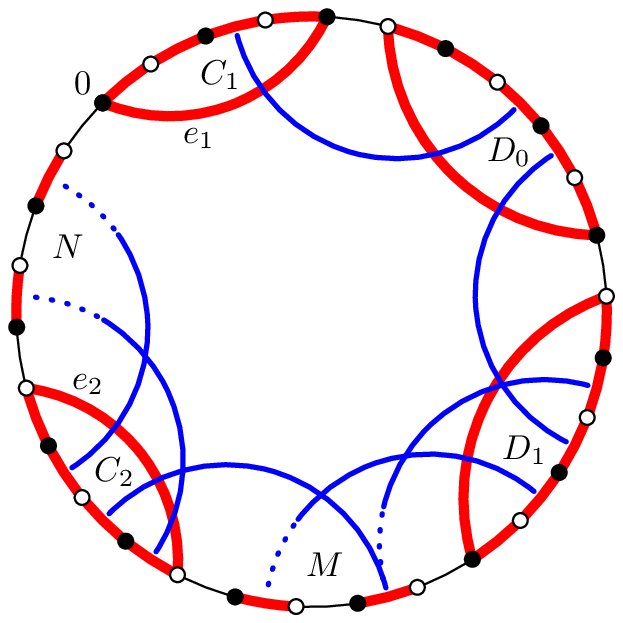}%
	\hfill
	\epsfbox{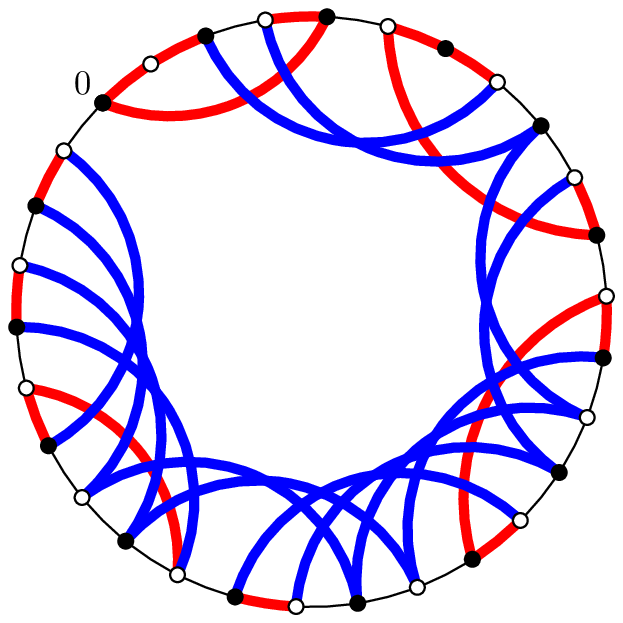}%
	\kern15pt%
 }
 \tline{%
	\kern15pt%
	\hbox to 60truemm{\hss(a) Schematic of links\hss}%
	\hfill
	\hbox to 60truemm{\hss(b) Hamilton cycle $H$\hss}%
	\kern15pt%
 }
 \begin{center}
 \caption{Case 3.2.2\label{fig-case3.2.2}}
 \end{center}
\end{figure}

 \smallskip
 \noindent\textbf{Case 3.2.1.} Suppose $M = \emptyset$.  Use odd
forward-linking edges and even backward-linking edges and repeated
CC-linking to combine all of $C_1, D_0, D_1, \ldots, D_{p-1}, C_2, E_0,
\ldots, E_{r-1}$ into a single cycle $Z_1$.  Then $Z_1$ still contains
all odd edges of the last cycle ($E_{r-1}$, or $C_2$ if $r=0$) so these
can be used to incorporate all edges of $N$ (if any), which are even, by
repeated CE-linking to give the final hamilton cycle $H$.
 Since we delete only edges of $C$ when linking, $H$ contains the chords
$e_1$ from $C_1$ and $e_2$ from $C_2$, as required.

 \smallskip
 \noindent\textbf{Case 3.2.2.} Suppose $M \ne \emptyset$.
 We form a cycle $H_2$ containing all vertices of $C_2, E_0, E_1,
\ldots, E_{r-1}$ and $N$ as in Case 3.2.1.  Note that $H_2$ contains all
even edges of $C_2$.  In a similar way, but switching the roles of odd
and even edges, we form a cycle $H_1$ containing all vertices of $C_1,
D_0, D_1, \ldots, D_{p-1}$ and $M$.  $H_1$ contains all edges of $M$,
which are odd.  We can now CC-link
$H_1$ and $H_2$ using the first edge of $M$, $(x_p,x_p+1)$, and its even
forward mate in $C_2$, to form a hamilton cycle $H$.
  As before, $H$ contains $e_1$ and $e_2$.

 This process is illustrated in Figure \ref{fig-case3.2.2}, where we
have $n = 30$, $q=5$, $k_1=4$, $p=2$, $k_2=4$, $r=0$ and $|M|=|N|=2$. 
In (a) we show a schematic of where the links are added: CC-links are
given by solid lines, and CE-links by lines that are dashed at the
matching end (to indicate that the matching edge is not deleted).  In
(b) we show the corresponding hamilton cycle $H$.  No edge of $C$ is
used by two CC-links, but the first edge of $M$, $(x_p, x_p+1)$, is used
by both a CE-link and a CC-link.

 \smallskip
 This concludes the proof of Proposition \ref{prop-main}.
 \end{proof}

 Now we prove our main result, which we restate.

 \begin{theoremmain}
 \MainTheoremText
 \end{theoremmain}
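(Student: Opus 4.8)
The plan is to deduce Theorem~\ref{thm-main} from Proposition~\ref{prop-main}, which already establishes the substantive combinatorial content. The theorem has two distinct assertions, and I would treat them separately. The first assertion---that no black-black diagonal lies on a hamilton cycle---is the easy part and follows from a parity obstruction. Since $G$ is a bipartite quadrangulation with equally many black and white vertices (say $N$ of each, so $2N$ vertices total), any hamilton cycle has even length $2N$ and must alternate between the two color classes when restricted to edges of $G$. A black-black diagonal joins two black vertices; using it forces two consecutive black vertices on the cycle. I would argue that using any positive number of black-black diagonals (and no white-white diagonals) breaks the required black-white balance: traversing the cycle, each black-black edge creates a local surplus of black over white, and with no compensating white-white edge the cycle cannot close up with $N$ black and $N$ white vertices each visited exactly once. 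This is precisely Thomassen's original observation, so I would state it cleanly as a counting argument on the cyclic sequence of colors.

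The second assertion is the one needing real work, but Proposition~\ref{prop-main} does almost all of it. The claim is that in $G \cup E_1 \cup \set{e_2}$ \emph{every} edge lies on a hamilton cycle, where $e_2$ is the single added white-white diagonal. First I would handle the edges $e_1 \in E_1$ and the edge $e_2$: Proposition~\ref{prop-main} directly provides a hamilton cycle through any chosen black-black diagonal together with $e_2$, so each element of $E_1$ is covered (apply the proposition with that element as the black-black diagonal and $e_2$ as the white-white diagonal), and $e_2$ itself lies on these same cycles. The remaining edges are the original edges of $G$. For each such edge $f$ of $G$, I need a hamilton cycle of the full graph through $f$; the natural route is to produce a hamilton cycle of $G \cup E_1 \cup \set{e_2}$ that uses $f$ and also uses $e_2$ (to maintain the color balance that makes hamiltonicity possible once the diagonals are present).

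The key technical point will be reconciling the presence of all of $E_1$ with the proposition, which adds only a single black-black diagonal. Here I expect the main obstacle: the hamilton cycles produced by Proposition~\ref{prop-main} use exactly one black-black diagonal and one white-white diagonal, whereas $G \cup E_1 \cup \set{e_2}$ may contain many black-black diagonals from $E_1$. However, the theorem only asks that each edge lie on \emph{some} hamilton cycle of the whole graph---a hamilton cycle is allowed to ignore the extra diagonals, since unused edges cause no obstruction. Thus for an original edge $f$ of $G$ I would seek a hamilton cycle using $f$ and $e_2$ but none of $E_1$; such a cycle automatically lives in $G \cup E_1 \cup \set{e_2}$. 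The cleanest way to guarantee this is to strengthen or re-examine the constructions of Proposition~\ref{prop-main}: its explicit hamilton cycles are built as grid patterns plus the two diagonals, so by symmetry (the automorphisms $U$, $R$, $F_1$, $F_2$ act transitively enough on edges) one can route a prescribed original edge $f$ onto the cycle. I would therefore argue that the transitivity of the automorphism group of $Q(m,n;q)$ on the edge orbits, combined with the flexibility in where the explicit cycles place their ``zigzag'' paths, lets us pass any given original edge $f$ while still using $e_2$; the black-black diagonal used in the proposition can be taken to be any element of $E_1$, and the other diagonals are simply left off the cycle.

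Finally I would assemble these pieces: every edge of $G \cup E_1 \cup \set{e_2}$ is either an element of $E_1$ (covered by Proposition~\ref{prop-main} applied with that diagonal and $e_2$), the edge $e_2$ (covered likewise), or an original edge $f$ of $G$ (covered by the edge-transitivity argument producing a hamilton cycle through $f$ and $e_2$ lying in the graph). Combined with the parity obstruction for the first assertion, this completes the proof. The delicate step deserving the most care is verifying that an arbitrary original edge can always be incorporated into one of the explicit cycles of Proposition~\ref{prop-main}; I would check this by using the automorphisms to normalize the position of $f$ relative to the fixed diagonal $e_1=\vv(0,0)\ee\vv(1,1)$, and then confirming that the grid patterns in Figures~\ref{fig-case1.1}--\ref{fig-case3.2.2} can be adapted to route through $f$, which is plausible because those patterns already use a large fraction of the grid edges in a controllable zigzag structure.
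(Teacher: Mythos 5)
Your handling of the edges in $E_1$ and of $e_2$ (apply Proposition~\ref{prop-main} with the chosen black-black diagonal and $e_2$) is exactly the paper's argument, and your parity argument for the first assertion is fine once made precise. The genuine gap is in your treatment of the original edges $f \in E(G)$, and it is twofold. First, your premise that a hamilton cycle through $f$ must use $e_2$ ``to maintain the color balance'' is backwards: color balance is only threatened by cycles that \emph{use} diagonals, and a cycle using no diagonal at all is unobstructed. Worse, the cycle you propose to find --- through $f$ and $e_2$ but avoiding all of $E_1$ --- cannot exist, by the very counting argument you give for the first assertion with the colors exchanged: if the only non-bichromatic edge on the cycle is white-white, then every cycle edge incident to a black vertex is bichromatic, so counting the bichromatic cycle edges by their black endpoints gives $2N$ (where $N$ is the size of each color class), i.e.\ \emph{all} $2N$ cycle edges are bichromatic, contradicting the use of $e_2$. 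A white-white diagonal needs a compensating black-black diagonal just as much as the reverse.

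Second, your fallback --- adapting the explicit cycles of Proposition~\ref{prop-main}, which pass through $e_1$ and $e_2$, so that they also pass through a prescribed edge $f$ --- is a statement about hamilton cycles through \emph{three} prescribed edges, strictly stronger than the proposition, and the appeal to ``flexibility'' and edge-transitivity does not establish it: the automorphisms of $Q(m,n;q)$ are already spent normalizing the positions of $e_1$ and $e_2$, so nothing remains to control where $f$ sits, and the case-by-case cycles of the proposition certainly miss many edges of $G$. The paper's proof for $f \in E(G)$ is far simpler and sidesteps all of this: it takes a hamilton cycle of $G$ alone, using no diagonals whatsoever (for $m \ge 2$, a vertical path closed up by a horizontal zigzag; for $m=1$, the explicit cycle $\cyc{0, q, q-1, \ldots, 2, 1, q+1, q+2, \ldots, n-1}$), notes that this cycle uses at least one vertical and one horizontal edge, and then applies an automorphism of $G$ carrying such an edge to $f$ --- which suffices because all vertical edges of $G$ lie in one orbit of the automorphism group and all horizontal edges in another. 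That cycle is automatically a hamilton cycle of $G \cup E_1 \cup \set{e_2}$, so no reconciliation with $E_1$ is ever needed.
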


 \begin{proof}
 Let $e$ be an edge of $G' = G \cup E_1 \cup \set{e_2}$.  Suppose first
that $e \in E(G)$.  We use the notation developed in the proof of
Proposition \ref{prop-main}.  If $m \ge 2$ then, since $n$ is even, it
is easy to construct a hamilton cycle in $G$ consisting of a vertical
path with ends joined by a horizontal zigzag, which uses at least one
vertical and one horizontal edge.  Since $e$ is either vertical or
horizontal (including edges across column $0$), and all vertical edges
are similar in $G$ and all horizontal edges are similar in $G$, we can
use an automorphism of $G$ to find a hamilton cycle of $G$, and hence of
$G'$, through $e$.
 If $m=1$ then, using the notation from Case 3 of the above proposition,
$G$ has a hamilton cycle
 $\cyc{0, q, q-1, q-2, \ldots, 2, 1, q+1, q+2, q+3, \ldots, n-1}$
 which uses both vertical edges (edges of $C$) and horizontal edges
($q$-chords).  Again, $e$ is either vertical or horizontal, and using an
automorphism of $G$ we can find a hamilton cycle through $e$.

 So suppose $e \in E_1$, or $e = e_2$.  If $e \in E_1$ we let $e' =
e_2$, and if $e = e_2$ we choose any $e' \in E_1$.  By Proposition
\ref{prop-main} there is a hamilton cycle through $e$ and $e'$ in $G
\cup \set{e,e'}$ and hence in $G'$.
 \end{proof}

 \section{Conclusion}\label{sec-concl}

 Our results provide some evidence that bipartiteness is the underlying
factor preventing $4$-connected toroidal graphs from being
edge-hamiltonian.  Unfortunately, we had to restrict ourselves to
examining graphs derived from the grid-type examples.  For other
examples, such as that shown in Figure \ref{fig-nongrid}, we do not have
a good structure theorem, and it is difficult even to know if a graph
constructed by adding diagonals to a bipartite quadrangulation of the
torus is $4$-connected.

 However, something at least is known about bipartite quadrangulations
of the torus.
 The graphs we are interested in are bipartite quadrangulations that can
yield a $4$-connected graph with the addition of diagonals on one side
of the bipartition (say, black-black diagonals).  It is not difficult to
show that this can happen only if all white vertices have degree exactly
$4$.
 Fujisawa, Nakamoto and Ozeki  \cite{FuNaOz13} recently showed that
bipartite quadrangulations of the torus in which all white vertices have
degree $4$ are hamiltonian, satisfying Conjecture \ref{conj-grnw}, as long as
they are at least $3$-connected.  Perhaps their techniques may yield
some results on edge-hamiltonicity after diagonals are added.

 \smallskip
 There is also a similar conjecture to Conjecture \ref{conj-grnw} for
the Klein bottle, and similar counterexamples to edge-hamiltonicity,
based on $4$-connected bipartite quadrangulations of the Klein bottle. 
A characterization of such quadrangulations is known \cite{NaNe00,
Th91}, but it is significantly more complicated than for the torus, and
the quadrangulations themselves are not as symmetric as those on the
torus, meaning that many more cases would have to be examined to obtain
a result similar to Theorem \ref{thm-main}.

 \section*{Acknowledgements}

 The first author acknowledges support from the U.S. National Security
Agency (NSA) under grant number H98230--09--1--0065.
 Both authors acknowledge support from the NSA under grant number
H98230--13--1--0233, and from the Simons Foundation under award number
245715.

 \end{document}